\documentclass[twoside]{article}

\usepackage[accepted]{aistats2019}
\usepackage{graphicx}
\usepackage{amssymb,amsmath,amsthm}
\usepackage{makecell}

\newcommand{\F}{\mathbb{F}_2}
\newcommand{\Pu}{\mathbf{P}_{\text{uni}}}
\newcommand{\Pp}{\mathbf{P}_{\text{plant}}}
\newcommand{\Pone}{\mathbf{P}_{1}}
\newcommand{\Pn}{\mathbf{P}}
\newcommand{\Ppp}{\mathbf{P}_{\text{plant},\pi}}
\newcommand{\Px}{\mathbf{P}_{x^*}}
\newcommand{\Pxs}{\mathbf{P}_{x^*}}
\newcommand{\cPF}{\cP_{\sf FLAT}}
\newcommand{\Pxp}{\mathbf{P}_{x^*,\pi}}
\newcommand{\Pxsp}{\mathbf{P}_{x,\pi}}
\newcommand{\Pxpp}{\mathbf{P}_{x',\pi}}
\newcommand{\cL}{\mathcal{L}}

\newcommand{\eps}{\varepsilon}

\newcommand{\cB}{\mathcal{B}}

\newcommand{\cP}{\mathcal{P}}

\newcommand{\cS}{\mathcal{S}}

\newcommand{\cV}{\mathcal{V}}

%
%


\usepackage[round]{natbib}


\newcommand{\E}{\mathbf{E}}

\newcommand{\cN}{\mathcal{N}}

\newtheorem{theorem}{Theorem}

\newtheorem{lemma}[theorem]{Lemma}
\newtheorem{proposition}[theorem]{Proposition}
\newtheorem{remark}[theorem]{Remark}



\begin{document}

%

%

\twocolumn[

\aistatstitle{Detection of Planted Solutions for Flat Satisfiability Problems}

\aistatsauthor{ Quentin Berthet \And Jordan Ellenberg  }

\aistatsaddress{ Statistical Laboratory \\ DPMMS, University of Cambridge \And Department of Mathematics \\ University of Wisconsin, Madison } ]

\begin{abstract}
We study the detection problem of finding planted solutions in random instances of flat satisfiability problems, a generalization of boolean satisfiability formulas. We describe the properties of random instances of flat satisfiability, as well of the optimal rates of detection of the associated hypothesis testing problem. We also study the performance of an algorithmically efficient testing procedure. We introduce a modification of our model, the light planting of solutions, and show that it is as hard as the problem of learning parity with noise. This hints strongly at the difficulty of detecting planted flat satisfiability for a wide class of tests.
\end{abstract}

\section{Introduction}
The rapid growth in many scientific fields of the size of typical datasets, and the increasingly complex models that are studied, have naturally brought forth the notions of statistical and computational complexity in learning theory. For many learning problems, the algorithmic aspect of inference procedures cannot be ignored: it is necessary to consider jointly the difficulties posed by the presence of noise or random errors, and by computational hardness.

The problem of understanding the trade-offs between algorithmic and statistical efficiency, has therefore attracted a lot of interest \cite{ChaJor13,BerCha16,BerPer17,FonBerPer19}. A particularly successful approach has been to investigate the links between learning problems that naturally arise, inspired by applications, and more abstract problems related to random discrete structures, that have been extensively studied in theoretical computer science. An hypothesis of \cite{Fei02}, based on the hardness of refuting satisfiability in random satisfiability formulas - initially used to prove hardness of approximation for several problems - has been used as a primitive to show hardness of improper learning \cite{DanLinSha12,DanLinSha13a,LivShaSha14}. An hypothesis on the planted clique problem has also been used as a primitive to prove computational limits to inference, initially for sparse principal component detection in \cite{BerRig13a,BerRig13b}, and subsequently for other problems in high dimensional statistics \cite{MaWu13,Che13,WanBerSam16,GaoMaZho15,CaiLiaRak15,WanBerPla16,BalBer18}. 

The desire to understand barriers to learning that come from randomness and computation has naturally brought attention to such fundamental problems, and the questions of learning distributions of their instances, in a computationally efficient manner. Examples include \cite{FelGriRey13,FelPerVem13,FelKot14,FelPerVem14}, investigating  the query complexity of statistical algorithms for these problems \cite{Kea98}, or \cite{Ber15} treating the problem of satisfiability detection as an hypothesis testing problem.  

We consider here a learning problem on sets of flats in $\F^n$, shown to be a generalization of the $k$-{\sf SAT} problem in $n$ variables. We introduce the $k$-{\sf FLAT} problem over sets of $m$ flats of dimension $n-k$, that are flat satisfiable if they do not cover all of $\F^n$. This is analogous to satisfiability formulas, that are satisfiable if the $m$ clauses do not exclude all the assignments. We also introduce a learning problem over these instances. It is formulated as a high-dimensional hypothesis testing problem

We study the optimal rate of detection for this problem, in a minimax sense, based on various parameters. We show that the optimal sample size $m$ scales linearly with the dimension $n$. These rates, derived only using information-theoretic limits, are useful as benchmarks. They give a context to the performance of candidate algorithms, and let us see if there is a gap between what we are able to achieve in a computationally efficient manner and the best possible case. We introduce a polynomial-time algorithm for a test, inspired by a technique of \cite{AroGe11}, and show that the test is successful for a sample of order $n^k$. 

We discuss further the algorithmic aspects of this problem, for different types. An important tool to do so is the introduction of a modification of the problem, denoted by lightly planted flat satisfiability, where for every sample one might ``forget'' to plant a solution, and draw instead from the uniform distribution. This change does not significantly alter the statistical aspects but affects the computational aspects, making it as hard as the ``Learning Parity with Noise'' problem. We also show how this result shows that a wide class of testing methods (including those based on so-called {\em statistical algorithms}) cannot be used for detection of planted solutions for flat satisfiability. Indeed, these procedures are by nature not sensitive to this modification, and view these two problems as equally hard.

These results aim to contribute to a larger discussion on the notion of learning under computational constraints. We provide here an example of a problem where an algorithmically efficient testing method is powerful, given a reasonable - albeit suboptimal - sample size (polynomial in the dimension instead of simply linear). This method is not robust to some modification in the model, where the planted assignment is only almost flat satisfiable. This in turn shows that it is impossible for any procedure that is robust to this modification to be both computationally and statistically efficient. 

This concept of ``weaker planting models'', that do not fundamentally change the statistical nature but make them computationally harder have recently attracted interest (see, e.g. \cite{AwaCharLai15} about hypothesis on planted cliques or dense subgraphs). By design, these generalizations prevent the use of brittle properties of the alternative distributions (the existence of a clique in a random graph, or here of an assignment that satisfies {\em all} clauses) to solve these decision problems. Here, we show that such a modification makes the problem significantly harder for computationally efficient methods. Furthermore, results about this auxiliary problem can be used to establish lower bounds for the original problem, for any method that does not depend on these brittle properties. This could be a useful approach to derive similar results for other problems, and to guide us in understanding which properties of certain distributions can be used by efficient algorithms.

The $k$-{\sf FLAT} problem, and the associated detection problem, are described in Section~\ref{SEC:probdesc}. In Section~\ref{SEC:threshold}, we show that there exists a sharp phase transition for flat satisfiability of random instances, with a an explicit threshold in the linear regime $m=\Delta n$. In Section~\ref{SEC:detect}, we derive the optimal rate of detection, with an optimal constant, that coincides with the flat satisfiability transition. In Section~\ref{SEC:poly}, we show that a test that can be computed in polynomial time will be successful with a sample size that is polynomial in $n$. We introduce and analyz in Section~\ref{SEC:light} the problem of detecting a lightly planted solution. We discuss computational aspects in Section~\ref{SEC:hard}. All proofs are in the appendix.

\section{Problem description}
\label{SEC:probdesc}
\subsection{The $k$-{\sf FLAT} problem }
Consider $\F^n$, the $n$-dimensional coordinate space on $\F$. We are given $V=(V_1,\ldots,V_m)$, a collection of $m$ flats of dimension $n-k$, or $k$-flats on $\F^n$. We denote by $k$-{\sf FLAT} the problem of determining whether there exists an element $x \in \F^n$ that is {\it flat satisfying}, i.e. that does not lie on any of the $V_j$, or alternatively, whether  $\F^n = \cup_j V_j$. We can define the flats by taking $k$ linearly independent linear forms $\ell_{j,1},\ldots,\ell_{j,k}$ and $k$ values $\eps_{j,1},\ldots,\eps_{j,k} \in \F$, and having
$$V_j = \{x\in\F^n \, :\, \ell_{j,i}(x) = \eps_{j,i}\, , \, \forall  i \in [k] \} \, .$$

We note that there are many such descriptions for any flat, but choosing the $\ell_{j,i}$ and $\eps_{j,i}$ uniformly at random does yield the uniform distribution on flats.  We also note that if we constrain the flats to be coordinate-aligned by taking each linear form among the projections on one of the $e_i$s, the $V_j$ can be interpreted as satisfiability clauses on $k$ literals, and the set $V_1,\ldots,V_m$ a satisfiability formula with $m$ clauses: For each $x \in \F^n$, $x$ satisfies the $j$-th clause if and only if $x \notin V_j$, and satisfies the formula if and only if it the case for all the $V_j$. The set of flat satisfying assignments is therefore $\F^n \setminus \cup_j V_j$. The problem described above is therefore a generalization of $k$ satisfiability. Thus, the $k$-{\sf FLAT} problem is {\sf NP}-complete for $k \ge 3$.\\

We denote by $\cS(V)$ the set of flat satisfying elements $\F^n \setminus \cup_j V_j$, and by $Z(V)$ its cardinality. We write $\cS$ and $Z$ when it is not ambiguous. We denote by {\sf FLAT} the set of $V$ that are flat satisfiable, i.e. for which there exists a satisfying element. We will consider asymptotics in the linear regime of $m= \Delta n$, for a constant $\Delta>0$, and $m,n \rightarrow +\infty$.

\subsection{Detection of planted flat-satisfiable assignment}

Given a random instance $V$, our goal is to distinguish two hypotheses for its underlying joint distribution. This detection problem is a generalization of the problem of detecting planted satisfiability \cite{Ber15}. Under the uniform distribution (denoted by $\Pu$) the $V_j$s are independent and identically distributed. Their distribution is uniform on the set of flats of dimension $n-k$. A possible way to generate them is to draw uniformly $k$ linearly independent linear forms $\ell_{j,1},\ldots,\ell_{j,k}$ and independently $k$ values $\eps_{j,1},\ldots,\eps_{j,k} \in \F$, and to define
$$V_j = \{x\in\F^n \, :\, \ell_{j,i}(x) = \eps_{j,i}\, , \, \forall  i \in [k] \} \, .$$
Note that the uniform distribution has a lot of symmetries. Indeed, let $G$ be the group of affine transformations, generated by translations and $\mathbf{GL}_n(\F)$. Then for any $\gamma \in G$, $\Pu$ is invariant by action of $\gamma$ on $\F^n$. This rich symmetry structure yields a very precise description of random instances of $k$-{\sf FLAT} problems.

Under the planted distribution, (denoted by $\Pp$), an element $x^* \in\F^n$ is chosen uniformly. Conditioned on this element, the $V_j$s are independent and identically distributed, with a distribution denoted by $\Px$. Under this distribution, they are chosen uniformly on the set of flats of dimension $n-k$ that  do not contain $x^*$. They can be generated in a similar manner as under the uniform distribution, by drawing uniformly $k$ linearly independent linear forms $\ell_{j,1},\ldots,\ell_{j,k}$, and the $k$ values $\eps_{j,i}$ uniformly among the $2^k-1$ choices that are not all $\ell_{j,i}(x^*)$. We define $V_j$ similarly. By construction, it does not contain $x^*$, which is a satisfying assignment for $V$. \\

\begin{remark}
\label{REM:process} 
Let $G_{x^*}$ be the subgroup of $G$, the affine group consisting of affine transformations fixing $x^*$.  Then $G_{x^*}$ acts transitively on the $k$-flats not containing $x^*$.  In particular, a probability distribution on $k$-flats which is supported on $k$-flats not containing $x^*$, and which is invariant under $G_{x^*}$, must be uniform on the $k$-flats not containing $x^*$; in other words, it is the distribution $\Px$ described above.  In particular, the procedure of choosing $k$ linear forms $\ell_i$ and $k$ bits $\eps_i$ uniformly at random subject to the conditions that the $\ell_i$ are linearly independent, and that the $\ell_i(x^*) - \eps_i$ is nonzero for at least one $i$, is evidently $G$-invariant; thus, the resulting distribution on $k$-flats is $\Px$.  In this paper we will mostly use this description of $\Px$.  But we want to emphasize that there are many such descriptions, i.e. many distributions on $k$-tuples of pairs $(\ell,\eps)$ which yield the distribution $\Px$ on $k$-flats.  
\end{remark}
 
In order to avoid confusion regarding the representation of these flats, we consider here that the input data is the actual flat, given to us either as a membership oracle - a function that returns whether any element of $\F^n$ belongs to the flat $V_j$ - or as a uniformly random base $\ell_j$ of the space of linear forms that are constant on the flat, and the corresponding values $\eps_j$. From a purely statistical point of view, this makes no difference.

From an algorithmic point of view, we will consider that our data is a uniformly random basis of linear forms and the associated values $(\ell_j,\eps_j)$ for the $k$-flat, which has then the distribution above.

Formally, we denote by $q_0$ the uniform distribution on $k$-flats of in $\F^n$, and for all $x\in \F^n$ by $q_x$ the uniform distribution on $k$-flats of $\F^n$, that do not contain $x$. With these notations, the distributions considered in this problem are defined thus
\begin{equation*}
\Pu := q_0^{\otimes m}\; , \; \Pxsp := q_x^{\otimes m}\; , \; \Pp := \frac{1}{2^n} \sum_{x \in \F^n} \Pxs\, .
\end{equation*}
Our detection problem can be written as testing between two hypotheses
\begin{eqnarray*}
H_0 &:& V = (V_1,\ldots,V_m) \sim \Pu\\
H_0 &:& V = (V_1,\ldots,V_m) \sim \Pp \, .
\end{eqnarray*}

\section{Flat-satisfiability threshold}
\label{SEC:threshold}
In this section, we study the probability that a uniformly random instance $V$ of the $k$-{\sf FLAT} problem is flat satisfiable, when $m= \Delta n$, as a function of $\Delta>0$. This is achieved by studying the first two moments of $Z(V)$, number of satisfying assignments.
\begin{lemma}
\label{LEM:firstmoment}
Under the uniform distribution
$$\E[Z] = 2^n (1-2^{-k})^m \, .$$
\end{lemma}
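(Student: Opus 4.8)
The plan is to compute $\E[Z]$ by linearity of expectation, writing $Z(V) = \sum_{x \in \F^n} \mathbf{1}\{x \in \cS(V)\}$ and noting that $\E[Z] = \sum_{x \in \F^n} \Pu(x \notin \cup_j V_j)$. Since the $V_j$ are i.i.d.\ under $\Pu$, and membership events for distinct $j$ are independent, we have $\Pu(x \notin \cup_j V_j) = \prod_{j=1}^m \Pu(x \notin V_j) = \Pu(x \notin V_1)^m$. So the whole computation reduces to finding, for a fixed $x$, the probability that a single uniformly random $k$-flat $V_1$ fails to contain $x$.

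First I would establish that for any fixed $x \in \F^n$, a uniformly random $k$-flat contains $x$ with probability exactly $2^{-k}$. The cleanest way is to use the generative description: $V_1 = \{y : \ell_{1,i}(y) = \eps_{1,i}, \forall i \in [k]\}$ where the $\ell_{1,i}$ are uniformly random linearly independent linear forms and the $\eps_{1,i}$ are independent uniform bits. Then $x \in V_1$ iff $\ell_{1,i}(x) = \eps_{1,i}$ for all $i$. Conditioning on any choice of the linear forms $\ell_{1,i}$, the values $\ell_{1,i}(x)$ are some fixed bits, and the $\eps_{1,i}$ are independent uniform bits, so the probability that all $k$ equalities hold is $2^{-k}$; this is independent of the conditioning, hence the unconditional probability is $2^{-k}$. (Alternatively one can invoke a symmetry/transitivity argument: the number of $k$-flats through a fixed point is the same as through any other point, and the fraction of $k$-flats in $\F^n$ passing through a given point equals $2^{n-k}/2^n = 2^{-k}$ by a counting argument on the affine Grassmannian, but the generative computation is more direct and needs no combinatorics.)

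Combining these, $\Pu(x \notin V_1) = 1 - 2^{-k}$, so $\Pu(x \notin \cup_j V_j) = (1-2^{-k})^m$, and summing over the $2^n$ choices of $x$ gives $\E[Z] = 2^n(1-2^{-k})^m$.

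I do not anticipate a real obstacle here; this is a routine first-moment computation. The only point requiring a moment's care is justifying that the single-flat membership probability is exactly $2^{-k}$ rather than something slightly off due to the linear-independence constraint on the $\ell_{1,i}$ — but as noted, conditioning on the forms and using independence of the $\eps_{1,i}$ makes this transparent, since the constraint affects only the distribution of the forms and not the conditional probability of the value-matching event. Everything else is linearity of expectation and independence across the $m$ flats.
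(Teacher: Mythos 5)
Your proof is correct and follows essentially the same route as the paper: linearity of expectation, independence of the $m$ flats, and the single-flat membership probability $2^{-k}$. The only (immaterial) difference is how that probability is justified — the paper notes $|V_1| = 2^{n-k}$ and invokes symmetry, whereas you condition on the linear forms and use the uniformity of the $\eps_{1,i}$; both are valid and you even mention the paper's counting argument as an alternative.
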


Note that the first moment of $Z$ is the same when we consider the number of solutions in random $k$-{\sf SAT} formulas. Intrinsically, the group of symmetries $H$ of the uniform distribution for $k$-{\sf SAT} - generated by translations and permutations - and of the uniform distribution for $k$-{\sf FLAT} - the affine group $G$ - both act transitively on $\F^n$, which is the main point of the proof above. However, while the action of the affine group is also doubly transitive on $\F^n$, it is not the case for the action of $H$, which preserves Hamming distances for instance. This affects the computation of the second moments of $Z$, which is consequently very different under these two models.

\begin{lemma}
\label{LEM:secondmoment}
Let $V=(V_1,\ldots,V_m)$ be a random collection of $m$ $k$-flats on $\F^n$ with distribution $\Pu$. Let $m=\Delta n$, for some $\Delta >0$. We have
$$\frac{\E[Z^2]}{\E[Z]^2} \le 1+ o(1) + \frac{1}{\E[Z]} \, .$$
\end{lemma}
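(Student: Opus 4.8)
The plan is to compute $\E[Z^2]$ by expanding the square and grouping pairs $(x,y)$ according to their Hamming distance, or more precisely according to whether $y$ lies in the flat-neighbourhood structure relative to $x$. Writing
$$Z^2 = \sum_{x,y \in \F^n} \prod_{j=1}^m \mathbf{1}\{x \notin V_j\}\mathbf{1}\{y \notin V_j\},$$
linearity and independence of the $V_j$ give $\E[Z^2] = \sum_{x,y} p(x,y)^m$, where $p(x,y) = \Pu(x \notin V_1 \text{ and } y \notin V_1)$. By the $\mathbf{GL}_n(\F)$-symmetry of $q_0$, the quantity $p(x,y)$ depends only on whether $x=y$ (contributing the diagonal term $2^n(1-2^{-k})^m = \E[Z]$) and, for $x \neq y$, only on the fact that the affine span of $\{x,y\}$ is a line; so $p(x,y)$ takes a single value $p$ for all $2^n(2^n-1)$ ordered off-diagonal pairs.

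The first key step is therefore to compute $p = \Pu(x,y \notin V_1)$ for a fixed pair $x \neq y$. Here I would use inclusion–exclusion: $p = 1 - \Pu(x \in V_1) - \Pu(y \in V_1) + \Pu(x,y \in V_1) = 1 - 2\cdot 2^{-k} + \Pu(x,y \in V_1)$. For a uniformly random $k$-flat, the event $\{x,y \in V_1\}$ requires all $k$ defining forms $\ell_{1,i}$ to satisfy $\ell_{1,i}(x) = \ell_{1,i}(y) = \eps_{1,i}$; in particular each $\ell_{1,i}$ must vanish on $x - y \neq 0$ (and then the shared value $\eps_{1,i}$ is forced up to the affine shift). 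Conditioning on this, one finds $\Pu(x,y \in V_1) = 2^{-k}\cdot \prod_{i=0}^{k-1}\frac{2^{n-1-i}-1}{2^{n}-2^i} \cdot(\text{correction})$; the upshot is that $\Pu(x,y \in V_1) = 2^{-2k}(1 + o(1))$ as $n \to \infty$ with $k$ fixed, since conditioning a random hyperplane to contain a fixed nonzero vector changes its probability of containing $x$ by a factor $1+O(2^{-n})$. Hence $p = (1-2^{-k})^2(1+o(1))$, uniformly over the (single) off-diagonal orbit.

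The second step is to assemble the bound. We get
$$\E[Z^2] = \E[Z] + 2^n(2^n-1)\, p^m \le \E[Z] + 2^{2n}(1-2^{-k})^{2m}(1+o(1))^m.$$
Since $\E[Z]^2 = 2^{2n}(1-2^{-k})^{2m}$ by Lemma~\ref{LEM:firstmoment}, dividing through yields
$$\frac{\E[Z^2]}{\E[Z]^2} \le \frac{1}{\E[Z]} + (1+o(1))^m.$$
The only subtlety is controlling $(1+o(1))^m$ with $m = \Delta n$: I need the $o(1)$ error in $p/(1-2^{-k})^2$ to be not merely $o(1)$ but $O(2^{-n})$ (or at least $o(1/n)$), so that $(1 + O(2^{-n}))^{\Delta n} = 1 + o(1)$. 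This is where the exact finite-field hyperplane counts matter rather than crude asymptotics, and it is the main obstacle: one must verify that the relative discrepancy between $\Pu(x,y\in V_1)$ and $2^{-2k}$, and likewise any discrepancy hidden in the linear-independence conditioning, is exponentially small in $n$. Granting that, $\frac{\E[Z^2]}{\E[Z]^2} \le 1 + o(1) + \frac{1}{\E[Z]}$ as claimed.
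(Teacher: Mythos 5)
Your proposal is correct and follows essentially the same route as the paper: expand $\E[Z^2]$ into the diagonal term $\E[Z]$ plus a single off-diagonal orbit (by the two-point homogeneity of the symmetry group), show the pair probability is $(1-2^{-k})^2\bigl(1+O(2^{-n})\bigr)$, and conclude since $\bigl(1+O(2^{-n})\bigr)^{\Delta n}=1+o(1)$. The one estimate you defer with ``granting that'' --- that the relative error in $\Pu(x,y\in V_1)=2^{-2k}(1+o(1))$ is exponentially small --- is exactly what the paper establishes by an exact count (fix the flat and randomize the ordered pair: $\Pu(x,y\notin V_1)=\frac{2^n-2^{n-k}}{2^n}\cdot\frac{2^n-2^{n-k}-1}{2^n-1}$), so your identified obstacle is real but easily discharged and the argument closes.
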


Together, Lemma~\ref{LEM:firstmoment} and \ref{LEM:secondmoment} yield the following
\begin{theorem}
\label{THM:sat}
For $k>0$ let $\Delta_k := \log(1/2)/\log(1-2^{-k})\approx 2^k \log(2)$. For $\Delta>0$, let $m = \Delta n$, and $V$ be uniformly distributed. When $m,n \rightarrow +\infty$, it holds that
\begin{itemize}
\item For $\Delta < \Delta_k$,  $\Pu(V \in {\sf FLAT}) \rightarrow 1$.
\item For $\Delta > \Delta_k$,  $\Pu(V \in {\sf FLAT}) \rightarrow 0$.
\end{itemize}
\end{theorem}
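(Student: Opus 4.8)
The plan is to combine the two moment estimates already established: Markov's inequality handles the supercritical regime, and the second-moment (Cauchy--Schwarz / Paley--Zygmund) inequality handles the subcritical regime. The first step is to locate $\Delta_k$ as the exact threshold of the first moment. Writing $m=\Delta n$ in Lemma~\ref{LEM:firstmoment} gives
$$\E[Z] = 2^n(1-2^{-k})^{\Delta n} = \exp\!\big(n\,(\log 2 + \Delta\log(1-2^{-k}))\big).$$
Since $\log(1-2^{-k})<0$, the bracketed exponent is positive iff $\Delta < -\log 2/\log(1-2^{-k}) = \Delta_k$ and negative iff $\Delta>\Delta_k$. Hence $\E[Z]\to\infty$ for $\Delta<\Delta_k$ and $\E[Z]\to 0$ for $\Delta>\Delta_k$. (The stated approximation $\Delta_k\approx 2^k\log 2$ is just the leading term of $-\log2/\log(1-2^{-k})$ as $k\to\infty$; I would record it without dwelling on it.)

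For $\Delta>\Delta_k$: the event $\{V\in{\sf FLAT}\}$ is exactly $\{Z\ge 1\}$, and $Z$ is a nonnegative integer-valued random variable, so Markov's inequality yields
$$\Pu(V\in{\sf FLAT}) = \Pu(Z\ge 1) \le \E[Z] \longrightarrow 0.$$

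For $\Delta<\Delta_k$: by Cauchy--Schwarz, $\E[Z]=\E[Z\,\mathbf{1}\{Z>0\}]\le \E[Z^2]^{1/2}\,\Pu(Z>0)^{1/2}$, hence
$$\Pu(V\in{\sf FLAT}) = \Pu(Z>0) \ge \frac{\E[Z]^2}{\E[Z^2]}.$$
By Lemma~\ref{LEM:secondmoment} the right-hand side is at least $\big(1+o(1)+\tfrac{1}{\E[Z]}\big)^{-1}$, and since $\E[Z]\to\infty$ in this regime this tends to $1$. Combined with the trivial upper bound $\Pu(V\in{\sf FLAT})\le 1$, this gives $\Pu(V\in{\sf FLAT})\to 1$.

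There is no genuine obstacle remaining, since the two lemmas do all the analytic work; the only point that deserves emphasis is that the satisfiability threshold sits at the \emph{first-moment} threshold $\Delta_k$ rather than at some strictly smaller constant. That is precisely what the ``$1+o(1)$'' in Lemma~\ref{LEM:secondmoment} buys us: the pairwise inclusion probabilities $\Pu(x\notin V_j,\ x'\notin V_j)$ differ from the independent value $(1-2^{-k})^2$ only by a factor $1+O(2^{-n})$, so the correction $(1+O(2^{-n}))^{\Delta n}$ is $1+o(1)$ and the variance of $Z$ is negligible compared with $\E[Z]^2$ throughout the whole range $\Delta<\Delta_k$. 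Were this correction merely bounded away from $1$, the second-moment method would only give satisfiability for $\Delta$ below some constant smaller than $\Delta_k$, and no sharp transition would follow.
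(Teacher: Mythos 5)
Your proof is correct and follows essentially the same route as the paper: identify $\Delta_k$ as the first-moment threshold, apply Markov's inequality for $\Delta>\Delta_k$, and apply the second-moment (Paley--Zygmund/Cauchy--Schwarz) bound $\Pu(Z>0)\ge \E[Z]^2/\E[Z^2]$ together with Lemma~\ref{LEM:secondmoment} for $\Delta<\Delta_k$. Your closing remark correctly pinpoints why the flat case gives a sharp threshold at the first-moment constant, in contrast with $k$-{\sf SAT}.
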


There is therefore a sharp phase transition in the linear regime, at $\Delta_k$, where the limit of the probability of flat satisfiability switches from 1 to 0. This result can be compared to the satisfiability transition for $k$-{\sf SAT} problems, for which $Z$ has the same expectation, but for which the second moment is much larger than $\E[Z]^2$. The proofs of satisfiability transitions \cite{AchPer04,CojPan13,DinSlySun14} are therefore much more technical, and this phenomenon does not occur at $\Delta_k$.

\section{Detection of planted flat-satisfiability}
\label{SEC:detect}
\subsection{Optimal rate}
\label{SEC:optimal}
One can understand the two distributions by the following generating process. Let $\cN_k$ be the number of subspaces of dimension $n-k$ in $\F^n$. There are therefore $2^k \cN_k$ possible $k$-flats (equivalent to a choice of linear forms, and $k$ values). Under the uniform distribution, $m$ flats are chosen independently and uniformly among the $2^k \cN_k$ possible choices. Under $\Px$, there is an excluded choice of values, and there are $(2^k-1) \cN_k$ allowed flats, among which we draw independently and uniformly $m$ flats. This interpretation of the distributions is useful to derive the likelihood ratio, in the following.

\begin{lemma}
\label{LEM:LR}
Let $V=(V_1,\ldots,V_m)$ be a collection of $m$ $k$-flats on $\F^n$,
$$\frac{\Pp}{\Pu}(V) = \frac{Z(V)}{\E[Z]}\, .$$
\end{lemma}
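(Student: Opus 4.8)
The plan is to compute the likelihood ratio directly from the explicit description of the two distributions as product measures over $m$ independent draws from a finite set of $k$-flats.

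First I would recall the counting setup just introduced: there are $N := 2^k \cN_k$ possible $k$-flats in total, and $\Pu$ makes each of the $m$ coordinates i.i.d.\ uniform on this set, so for any fixed tuple $V = (V_1,\ldots,V_m)$ we have $\Pu(V) = N^{-m}$. Next I would write out $\Pp$ by unwinding the definition $\Pp = 2^{-n}\sum_{x^* \in \F^n} \Pxs$. Under $\Px$, each coordinate is i.i.d.\ uniform on the set of $k$-flats not containing $x^*$, which has size $(2^k - 1)\cN_k = N(1 - 2^{-k})$. Hence $\Px(V) = \bigl(N(1-2^{-k})\bigr)^{-m}$ if none of the $V_j$ contains $x^*$, i.e.\ if $x^* \in \cS(V)$, and $\Px(V) = 0$ otherwise.

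Then I would assemble the ratio:
\begin{equation*}
\frac{\Pp}{\Pu}(V) = \frac{1}{2^n}\sum_{x^* \in \F^n} \frac{\Px(V)}{\Pu(V)} = \frac{1}{2^n}\sum_{x^* \in \F^n} \mathbf{1}\{x^* \in \cS(V)\}\, (1-2^{-k})^{-m} = \frac{Z(V)}{2^n (1-2^{-k})^m}\, ,
\end{equation*}
since $\sum_{x^*} \mathbf{1}\{x^* \in \cS(V)\}$ is by definition $Z(V)$. Finally I would invoke Lemma~\ref{LEM:firstmoment}, which gives $\E[Z] = 2^n(1-2^{-k})^m$, to rewrite the denominator as $\E[Z]$ and conclude.

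I do not expect any serious obstacle here; the only point requiring a word of care is making sure that the per-coordinate normalization constant under $\Px$ is genuinely $N(1-2^{-k})$ — that is, that the number of $k$-flats avoiding a fixed point is exactly $(2^k-1)\cN_k$ — which follows from the observation that each $(n-k)$-dimensional subspace gives rise to $2^k$ cosets, exactly one of which contains $x^*$. One should also note the convention that $Z(V)=0$ forces $\Pp(V)=0$ as well, so the identity holds on all of the support and trivially off it.
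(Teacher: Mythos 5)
Your proof is correct and follows essentially the same route as the paper's: both compute the per-point likelihood ratio $\Px(V)/\Pu(V)$ via the count of $N = 2^k\cN_k$ total flats versus $(2^k-1)\cN_k$ flats avoiding $x^*$, sum the resulting indicator over $x^* \in \F^n$ to obtain $Z(V)$, and identify the normalization $2^n(1-2^{-k})^m$ with $\E[Z]$ from Lemma~\ref{LEM:firstmoment}. No issues.
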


The distribution $\Pp$ therefore has a likelihood proportional to $Z(V)$: only the flat satisfiable $V$ have a positive measure, and those with a large number of flat satisfying assignments are more likely to occur. This can be contrasted with the uniform distribution on {\sf FLAT}, for which all flat satisfiable V are equally likely. One of the motivations behind the study of this likelihood ratio is its relationship with the total variation distance. Indeed, we have
$$d_{\sf TV}(\Pu,\Pp) = \frac{1}{2}\E\Big[\Big|\frac{Z}{\E[Z]}-1\Big|\Big] \le \frac{1}{2} \sqrt{\frac{\E[Z^2]}{\E[Z]^2}-1}\, .$$
The last inequality is a consequence of Jensen's inequality, and gives a more tractable bound on the total variation distance. It is equivalent to considering the $\chi^2$ divergence between the two distributions. When $\Delta <\Delta_k$, Lemma~\ref{LEM:secondmoment} yields
$$d_{\sf TV}(\Pu,\Pp)  \le \frac{1}{2}\sqrt{\frac{1}{\E[Z]}+o(1)} \rightarrow 0 \, .$$
Note that this approach is not fruitful to control the total variation distance in the $k$-{\sf SAT} planted satisfiability problem, as $\E[Z^2]$ is too large, in the linear regime of $m= \Delta n$ for some constant $\Delta>0$.

For this problem, when $\Delta >\Delta_k$, $\Pu(Z > 0) \le \E[Z] \rightarrow 0$. Checking flat satisfiability, i.e. if $Z>0$ is therefore a test with a one-sided probability of error equal to $\Pu(Z>0)$, as we have $\Pp(Z>0)=1$. Together, these two observations yield the following
\begin{theorem}
\label{THM:minimax}
For a fixed $\Delta>0$, let $m=\Delta n$. The following holds
\begin{itemize}
\item For $\Delta > \Delta_k$, and $\psi_{\sf FLAT}(V) = \mathbf{1}\{Z(V) >0\}$
$$\Pu(\psi_{\sf FLAT}=1) \vee \Pp(\psi_{\sf FLAT}=0) \rightarrow 0 \, .$$
\item For $\Delta < \Delta_k$, 
$$\inf_{\psi}\Pu(\psi=1) \vee \Pp(\psi=0) \rightarrow \frac 12 \, .$$
\end{itemize}
\end{theorem}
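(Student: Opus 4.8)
The plan is to package the likelihood-ratio and second-moment estimates already obtained into the two statements, using only elementary facts about hypothesis testing. Consider first $\Delta > \Delta_k$. Under $\Pp$ the planted point $x^*$ is flat satisfying by construction, so $Z(V) \ge 1$ almost surely and hence $\Pp(\psi_{\sf FLAT} = 0) = \Pp(Z(V) = 0) = 0$. For the other error, Markov's inequality and Lemma~\ref{LEM:firstmoment} give
$$\Pu(\psi_{\sf FLAT} = 1) = \Pu(Z(V) \ge 1) \le \E[Z] = \big(2(1-2^{-k})^{\Delta}\big)^n,$$
and since $2(1-2^{-k})^{\Delta_k} = 1$ while $1 - 2^{-k} < 1$, we have $2(1-2^{-k})^{\Delta} < 1$ for $\Delta > \Delta_k$, so this bound tends to $0$. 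Taking the maximum of the two errors proves the first item.

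Now take $\Delta < \Delta_k$. I would use the standard identity, valid over all randomized tests $\psi$,
$$\inf_\psi \big[\Pu(\psi = 1) + \Pp(\psi = 0)\big] = 1 - d_{\sf TV}(\Pu, \Pp),$$
together with the pointwise bound $\max(a,b) \ge (a+b)/2$, which yields
$$\inf_\psi \Pu(\psi = 1) \vee \Pp(\psi = 0) \ge \tfrac{1}{2}\big(1 - d_{\sf TV}(\Pu, \Pp)\big).$$
By the Jensen/$\chi^2$ bound displayed just before the theorem and Lemma~\ref{LEM:secondmoment}, $d_{\sf TV}(\Pu, \Pp) \le \tfrac{1}{2}\sqrt{1/\E[Z] + o(1)} \to 0$ because $\E[Z] = (2(1-2^{-k})^{\Delta})^n \to \infty$ for $\Delta < \Delta_k$; hence the lower bound converges to $\tfrac{1}{2}$. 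For the matching upper bound, the test that ignores $V$ and returns $1$ with probability $\tfrac{1}{2}$ has both error probabilities equal to $\tfrac{1}{2}$, so the infimum is at most $\tfrac{1}{2}$. Combining the two bounds gives the stated limit $\tfrac{1}{2}$.

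There is essentially no serious obstacle here: all of the analytic content is already contained in Lemmas~\ref{LEM:firstmoment} and~\ref{LEM:secondmoment} and in the total variation estimate preceding the statement, so this theorem is really a repackaging of those results. The only points deserving care are that the total variation identity must be invoked over the full class of (possibly randomized) tests so that the lower bound is sharp, and that the trivial randomized test is what supplies the matching $\tfrac{1}{2}$ upper bound; without it one would only conclude that the minimax error is bounded below by a quantity tending to $\tfrac{1}{2}$, rather than that it actually converges to $\tfrac{1}{2}$.
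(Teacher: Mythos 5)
Your proposal is correct and follows essentially the same route as the paper, which simply assembles the Markov bound $\Pu(Z>0)\le \E[Z]\to 0$ (with $\Pp(Z>0)=1$) for $\Delta>\Delta_k$ and the total variation bound $d_{\sf TV}(\Pu,\Pp)\le \frac12\sqrt{1/\E[Z]+o(1)}\to 0$ for $\Delta<\Delta_k$ into the two bullets. Your added care about the randomized test supplying the matching upper bound of $\tfrac12$ is a legitimate refinement of a point the paper leaves implicit.
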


We observe in the statistical problem the same phase transition as in Theorem~\ref{THM:sat}: the problem switches at $\Delta_k$ from being insolvable (with a total variation distance converging to 0) to the existence of an powerful test, i.e. checking flat satisfiability. Note that in this regime, since $\E[Z]<1$, this test is equivalent to the likelihood ratio test $Z(V)>E[Z]$.

\subsection{Alternative planting distribution}
\label{SEC:alt}

The distribution $\Pp$ is a canonical way to draw a $k$-{\sf FLAT} instance that is surely satisfying while having independence of the $m$ $k$-flats, and having a simple distribution for each flat (conditionally on the choice of $x^*$). This is done in a similar spirit to the planted distribution used for $k$-{\sf SAT} instances \cite{Ber15,FelPerVem13}. More generally, let $\cP_{\sf FLAT}$ be the set of distributions on flat satisfiable instances defined as
$$P \in \cP_{\sf FLAT} \iff   \Pn(V \in {\sf FLAT}) = 1\, .$$
One can consider the more general problem of detecting planted flat satisfiability with the following hypothesis testing problem with an unknown planting distribution
\begin{eqnarray*}
H_0 &:& V = (V_1,\ldots,V_m) \sim \Pu\\
H_0 &:& V = (V_1,\ldots,V_m) \sim \Pone \in \cPF \, .
\end{eqnarray*}
The test $\psi_{\sf FLAT}$ exploits almost no property of the $\Pp$, apart from $\Pp(V \in {\sf FLAT}) = 1$, the sure existence of a flat satisfying assignment. Therefore, the upper bound described in Theorem~\ref{THM:minimax} would still hold for any choice of alternative distribution $\Pone \in \cPF$, and even for the composite hypothesis testing problem above. The lower bound is based on the fact that the likelihood ratio between $\Pp$ and $\Pu$ is equal to $Z/\E[Z]$, which is not true for all planting distribution $\Pone$. However, to prove a lower bound for the composite hypothesis testing problem, it suffices to obtain such a bound for one example of the set of distributions (here $\Pp \in \cPF$). Together these observations yield the following
\begin{theorem}
\label{THM:minimaxcomposite}
For a fixed $\Delta>0$, let $m=\Delta n$. The following holds
\begin{itemize}
\item For $\Delta > \Delta_k$, and $\psi_{\sf FLAT}(V) = \mathbf{1}\{Z(V) >0\}$
$$\Pu(\psi_{\sf FLAT}=1) \vee \sup_{\Pone \in \cPF}\Pone(\psi_{\sf FLAT}=0) \rightarrow 0 \, .$$
\item For $\Delta < \Delta_k$, 
$$\inf_{\psi} \big\{\Pu(\psi=1) \vee \sup_{\Pone \in \cPF}\Pone(\psi=0) \big\} \rightarrow \frac 12 \, .$$
\end{itemize}
\end{theorem}

Overall, the test $\psi_{\sf FLAT}$ is reliant on the fact that under the alternative, $V$ is satisfiable, not on {\em how} this satisfiability is achieved. We discuss further this feature of certain tests in Section~\ref{SEC:poly} and~\ref{SEC:hard}, when considering some algorithmic aspects of this decision problem.	

The picture is clear from the statistical and probabilistic point of view. However, from a computational point of view, checking if $Z$ is equal to 0 (i.e. if the union of flats covers $\F^n$) is an {\sf NP}-complete problem for $k \ge 3$, as $k$-{\sf SAT} is a particular case. An interesting question is whether there are detection methods that can solve this problem in an algorithmically efficient manner.

\section{Polynomial-time detection}
\label{SEC:poly}


 We study in this section the statistical performance of a test that runs in polynomial time. We introduce some notations necessary to define this test. Let $W$ be a $k$-flat of $\F^n$, defined by $k$ affine constraints
$$W = \{x\in\F^n \, :\, \ell_{i}(x) = \eps_{i}\, , \, \forall  i \in [k] \} \, .$$
We make the observation that $x$ does not lie on $W$ if and only if one of the above equations is not satisfied, or equivalently, taking $\alpha_i = 1-\eps_i$
$$x \notin W \iff P_{\ell, \alpha}(x) := \prod_{i=1}^k \big(\ell_i(x) + \alpha_i \big) =0 \, .$$
Factoring out, $P_{\ell,\alpha}$ can be written as a multivariate polynomial over $\F$ of degree $k$
$$P_{\ell, \alpha}(x) = \sum_{\substack{S \subset [n] \\ |S| \le k}} c_S(\ell,\alpha) \prod_{s \in S} x_s \, .$$
Note that all the monomials are squarefree, as $z^2=z$ for all $z\in \F$. Solving the $k$-{\sf FLAT} problem is therefore equivalent to solving a system of $m$ polynomial equations of degree $k$, an {\sf NP}-hard problem. In order to obtain a test that is computationally tractable, we lift this system of equations in a higher dimensional space to obtain a system of linear equations with quadratic constraints, that we will then relax. This general idea is common over reals \cite{Par01,Las01}, and adapted here in a finite field. In this particular context, this approach is inspired by \cite{AroGe11}, where this technique is used in a problem of learning with errors.

Let $N_k = \sum_{i=0}^k {n \choose i} \le (n+1)^k$, and for $x \in \F^n$, let $X \in \F^{N_k}$ such that $X_S = \prod_{s \in S} x_s$. We remark that $P_{\ell,\alpha}$ takes the same values as a linear form $\cL_{\ell ,\alpha}$ over $\F^{N_k}$, such that $P_{\ell,\alpha}(x) = \cL_{\ell ,\alpha}(X)$ for the $X$ associated to $x$, by taking
$$\cL_{\ell ,\alpha}(X) = \sum_{\substack{S \subset [n] \\ |S| \le k}} c_S(\ell,\alpha) X_S \, .$$
If we consider the mapping $\phi$ from $\F^n$ to $\F^{N_k}$, the so-called Veronese embedding, that associates $x$ to $X$, and $\cV \subset \F^{N_k}$ the image of $\phi$, it is equivalent to solve $P_{\ell,\alpha}(x) = 0$ over all of $\F^n$ and $\cL_{\ell ,\alpha}(X) = 0$ over $\cV$. In particular, determining if an instance of the $k$-{\sf FLAT} problem is flat satisfiable is equivalent to determining if a system of $m$ linear equations in $\F^{N^k}$ has a solution in $\cV$. The image $\cV$ can be written as the intersection of quadratic constraints of the type $X_{\{1\}} X_{\{2\}} = X_{\{1,2\}}$, making the system of equations intractable. In order to obtain a tractable approximation of this problem, we consider the relaxed linear system of equations, by keeping solely the constraint $X_{\emptyset} =1$. Formally, for an instance $V$ of the $k$-{\sf FLAT} problem, we will consider for each flat $V_j$ the associated linear form $\cL_{\ell_j,\alpha_j}$, and the overall system $\cL_V$ of $m+1$ linear equations in $\F^{N_k}$
\begin{equation}
\label{EQ:linsym}
\tag{$\cL_V$} \cL_{\ell_j ,\alpha_j}(X) = 0\; , \, \forall j \in [m]\; ; \; X_\emptyset =1 \, . 
\end{equation}
Note that if $x^* \in \F^n$ is flat-satisfiable for $V$, the associated $X^* = \phi(x^*) \in \F^{N_k}$ is a solution to $\cL_V$, as it is even a solution to the linear system of equations with stricter constraint $X \in \cV$. As a consequence, the system $\cL_V$ always has a solution for $V \sim \Pp$. However, under the uniform distribution, it is not always the case. 
\begin{lemma}
\label{LEM:syslin}
Recall that $\Delta_k := \log(1/2)/\log(1-2^{-k})\approx 2^k \log(2)$. Let $m = \Delta N_k$ for $\Delta>\Delta_k$, and $V=(V_1,\ldots,V_m) \sim \Pu$. The linear system $\cL_V$ has no solutions in $\F^{N_k}$, with probability converging to 1 when $n \rightarrow +\infty$.
\end{lemma}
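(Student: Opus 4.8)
The plan is to run a first-moment argument in the lifted space $\F^{N_k}$, mirroring the computation of $\E[Z]$ behind Theorem~\ref{THM:sat} but with $\F^n$ replaced by $\F^{N_k}$. Let
$$Y(V) := \#\big\{X \in \F^{N_k} \,:\, X_\emptyset = 1 \text{ and } \cL_{\ell_j,\alpha_j}(X) = 0 \text{ for all } j \in [m]\big\}$$
be the number of solutions of the system $\cL_V$. Since $X_\emptyset = 1$ is itself one of the equations of $\cL_V$, this system is feasible if and only if $Y(V) \ge 1$, so by Markov's inequality it is enough to prove $\E[Y] \to 0$. The pairs $(\ell_j,\alpha_j)$, $j \in [m]$, are i.i.d.\ under $\Pu$, so for each fixed $X$ the events $\{\cL_{\ell_j,\alpha_j}(X) = 0\}$ are independent, and
$$\E[Y] = \sum_{X \,:\, X_\emptyset = 1} p(X)^m, \qquad p(X) := \Pu\big(\cL_{\ell_1,\alpha_1}(X) = 0\big).$$

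The key step is the uniform estimate $p(X) \le 1 - 2^{-k}$, valid for \emph{every} $X$ with $X_\emptyset = 1$; this says the relaxation pays, per equation, exactly the factor $\Pu(x \notin V_1) = 1-2^{-k}$ from Lemma~\ref{LEM:firstmoment}. To see it, recall that $\cL_{\ell,\alpha}(X)$ is obtained from $P_{\ell,\alpha}(x) = \prod_{i=1}^k(\ell_i(x)+\alpha_i)$ by expanding the product as a squarefree polynomial in $x$ and substituting $X_S$ for $\prod_{s\in S}x_s$. Condition on $\ell = (\ell_1,\ldots,\ell_k)$ and expand the product over subsets $S \subseteq [k]$:
$$\cL_{\ell,\alpha}(X) = \sum_{S \subseteq [k]} a_S \prod_{i \in [k]\setminus S} \alpha_i,$$
where $a_S = a_S(\ell,X)$ is what the above substitution turns $\prod_{i\in S}\ell_i(x)$ into; in particular $a_\emptyset = X_\emptyset = 1$. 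As a function of $\alpha \in \F^k$ this is multilinear, and the top monomial $\alpha_1\cdots\alpha_k$ occurs only in the term $S=\emptyset$, with coefficient $a_\emptyset = 1 \ne 0$. Hence $\cL_{\ell,\alpha}(X)$, as a polynomial in $\alpha$, is nonzero; since the multilinear representation of a function $\F^k\to\F$ is unique, it represents a nonzero function, which therefore vanishes on at most $2^k-1$ of the $2^k$ points of $\F^k$. As $\alpha$ is uniform on $\F^k$ and independent of $\ell$ under $\Pu$, averaging over $\ell$ gives $p(X) \le 1-2^{-k}$.

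Plugging this in, and using that exactly $2^{N_k-1}$ vectors $X$ satisfy $X_\emptyset = 1$, with $m = \Delta N_k$ we obtain
$$\E[Y] \le 2^{N_k-1}(1-2^{-k})^m = \tfrac{1}{2}\,\big[\,2(1-2^{-k})^{\Delta}\,\big]^{N_k}.$$
By the definition of $\Delta_k$ we have $2(1-2^{-k})^{\Delta_k}=1$, and $t\mapsto(1-2^{-k})^t$ is strictly decreasing, so for $\Delta > \Delta_k$ the bracket is a constant strictly below $1$; since $N_k\to\infty$ as $n\to\infty$, this forces $\E[Y]\to 0$, and therefore $\Pu(\cL_V \text{ has a solution}) = \Pu(Y\ge 1) \le \E[Y] \to 0$, which is the claim.

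The only step I expect to require care is the bound $p(X)\le 1-2^{-k}$ for pseudo-points $X$ lying \emph{outside} the Veronese variety $\cV$: for $X=\phi(x)$ it is immediate since $\{\cL_{\ell,\alpha}(\phi(x))=0\}$ is just $\{x\notin V_1\}$, but in general one has to exploit that $\cL_{\ell,\alpha}(X)$ is multilinear in $\alpha$ with leading coefficient pinned to $1$ by the constraint $X_\emptyset = 1$. The rest is precisely the easy ($\Delta>\Delta_k$) half of Theorem~\ref{THM:sat}, with the sole change that the relaxation has traded the ambient dimension $n$ for $N_k \le (n+1)^k$, turning the linear threshold $m=\Delta_k n$ into a polynomial-size threshold $m=\Delta_k N_k$ for the relaxed system.
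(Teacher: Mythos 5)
Your proof is correct and follows essentially the same route as the paper's: the same key estimate $\Pu(\cL_{\ell,\alpha}(X)=0)\le 1-2^{-k}$ for every $X$ with $X_\emptyset=1$, obtained by viewing $\cL_{\ell,\alpha}(X)$ as a multilinear polynomial in $\alpha$ whose top coefficient is pinned to $X_\emptyset=1$, followed by independence over $j$ and a union bound over $\F^{N_k}$ (your first-moment computation of $\E[Y]$ is exactly that union bound). No gaps.
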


We consider the test $\psi_{\cL}:V \mapsto \mathbf{1}\{\text{$\cL_V$ has a solution}\}$. When $m$ is of order $N_k \le (n+1)^k$, it is possible to construct and solve the linear system, and thus to determine the outcome of the test, in time $O(n^{3k})$, by Gaussian elimination. The result of Lemma~\ref{LEM:syslin} gives a guarantee, in terms of sample size, about the performance of this test.
\begin{theorem}
\label{THM:poly}
Let $m = \Delta n^k$, for $\Delta > \Delta_k$. It holds that
$$\Pu(\psi_\cL=1) \vee \Pp(\psi_\cL=0) \rightarrow 0 \, .$$
\end{theorem}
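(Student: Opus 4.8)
The plan is to dispatch the two error probabilities separately; both follow almost immediately from material already in hand. For the planted side, recall the observation recorded just before Lemma~\ref{LEM:syslin}: if $V\sim\Pp$ with planted assignment $x^*$, then $x^*\in\cS(V)$ by construction, so for every $j$ we have $\cL_{\ell_j,\alpha_j}(X^*)=P_{\ell_j,\alpha_j}(x^*)=0$ where $X^*=\phi(x^*)$, and also $X^*_\emptyset=1$ since the empty product equals $1$. Hence $X^*$ is a solution of $\cL_V$, so $\psi_\cL(V)=1$ with probability one under $\Pp$, and therefore $\Pp(\psi_\cL=0)=0$.

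For the uniform side I would reuse verbatim the union bound established in the proof of Lemma~\ref{LEM:syslin}, which for any $m$ gives
$$\Pu(\psi_\cL=1)=\Pu(\text{$\cL_V$ has a solution})\le 2^{N_k}(1-2^{-k})^m\,.$$
Now substitute $m=\Delta n^k$. Using the defining identity $\log\frac{1}{1-2^{-k}}=\log 2/\Delta_k$ together with the crude bound $N_k\le (n+1)^k=n^k(1+1/n)^k$, the right-hand side is at most
$$\exp\!\Big(N_k\log 2-m\log\tfrac{1}{1-2^{-k}}\Big)\le \exp\!\Big(\log 2\cdot n^k\big((1+1/n)^k-\Delta/\Delta_k\big)\Big)\,.$$
Since $\Delta/\Delta_k>1$ is a fixed constant and $(1+1/n)^k\to 1$, for all $n$ large enough the bracket is bounded above by a strictly negative constant, so this bound decays like $\exp(-c\,n^k)\to 0$. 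Combined with the planted side, this proves the theorem.

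There is no real obstacle here; the only point to watch is that the sample size is now parametrized by $n^k$ rather than by $N_k$, and since $N_k$ and $n^k$ agree up to a factor $(1+o(1))$ this discrepancy is harmless precisely because $\Delta>\Delta_k$ is a strict inequality — the slack in the constant absorbs it. Equivalently, one could note that $m=\Delta n^k\ge \Delta' N_k$ for some $\Delta_k<\Delta'<\Delta$ once $n$ is large, and invoke Lemma~\ref{LEM:syslin} directly together with the obvious monotonicity of the event ``$\cL_V$ has a solution'' under adding more equations. Finally, it is worth restating (as the discussion preceding the theorem already notes) that $\psi_\cL$ is computable in time $O(n^{3k})$ by Gaussian elimination on the $m+1=O(n^k)$ equations in $N_k=O(n^k)$ unknowns, which is what makes Theorem~\ref{THM:poly} a statement about a genuinely polynomial-time test.
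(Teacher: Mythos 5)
Your proposal is correct and is exactly the argument the paper intends: the planted error is zero because $X^*=\phi(x^*)$ always solves $\cL_V$, and the uniform error is controlled by the union bound $2^{N_k}(1-2^{-k})^m$ from Lemma~\ref{LEM:syslin}. Your explicit handling of the $n^k$ versus $N_k$ parametrization (absorbed by the strict inequality $\Delta>\Delta_k$) is a detail the paper glosses over, and it is handled correctly.
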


The test $\psi_\cL$ allows to distinguish the two distributions with probability of error going to 0, with computation time and sample size that are both polynomial in $n$. The statistical performance shown here is suboptimal, and it is not clear whether there exists a test that runs in time polynomial in $n$ and that can distinguish the two distributions with high probability for a sample size linear in $n$, the optimal regime. Perhaps the properties of the space of satisfiable assignment, such as the {\em shattering property} \cite{AchCoj08} could shed some light on these phenomena.

There are other detection problems for which the optimal regime of detection is not known to be attainable by algorithmically efficient testing methods. In particular, for the planted clique problem \cite{Jer92,Kuc95} in a graph with $n$ vertices, even though hidden cliques of size greater than $2\log_2(n)$ can be detected or recovered, polynomial-time algorithms are only known to be efficient at size  of order $\sqrt{n}$ \cite{AloKriSud98}, widely believed to be optimal. This hypothesis has recently been used as a primitive to show hardness for other learning problems. This problem, as well as those of estimating  planted assignments for CSP problems have been studied, and computational lower bounds shown to exist, in a specific computational model \cite{FelGriRey13,FelPerVem13}.

A common type of method to solve these detection problems, one that comes naturally to mind to find an improved algorithm for this problem - i.e.  that would need significantly less than $n^k$ samples - is to study the behavior of a judiciously chosen, tractable statistic $\sigma$ of the data $D$. When $D$ is constituted of $m$ independent samples, let us consider only $\sigma$ that are sums of statistics $\rho$ of $r$-tuples of the data, for a finite $r$. Simply, these approaches revolve around showing that $\sigma(D)$ behaves differently under the two distributions of interest, say $\E_{\text{uniform}}[\sigma(D)] =0$, and $\E_{\text{planted}}[\sigma(D)] =\mu >0$, and by showing that when the sample size is large enough, $\mu$ is much greater than the typical deviations of $\sigma$, making a test such as such as $\mathbf{1}\{\sigma(D) > \mu/2 \}$ powerful. Typical examples include statistics based on the degrees of vertices in a graph, bias in signs of literals in a CSP, etc. This type of approaches has been formalized in the notion of {\em statistical algorithms} \cite{FelGriRey13}, where instead of having access to i.i.d. samples $X_i$ with an unknown distribution, one has access to an oracle that returns, for any query function $f$, a value close to $\E[f(X)]$, up to some tolerance $\tau$. This generalizes the query model of \cite{Kea98}.

This is not the approach used here, where the test $\psi_\cL$ is based on the \textit{existence} of an element verifying certain properties - here being a solution to a linear system of equations in a finite field - not on summing a certain statistic over i.i.d samples (or couples, or triplets of these samples). This is a situation similar to the one described in Section~\ref{SEC:alt}, where the test $\psi_{\sf FLAT}$ relies solely on the fact that under the planted distribution, there exists a planted assignment.  Similarly, the result of Theorem~\ref{THM:poly} would still hold for any alternative distribution $\Pone \in \cPF$ or for the composite hypothesis testing problem on $\cPF$, as $V$ being flat satisfiable implies that $\cL_V$ has a solution.

In the following section, we describe a modified version of our hypothesis testing problem, by introducing the model of light planting. Even though it does not change the statistical nature of the problem, we show in Section~\ref{SEC:hard} it is as hard as the  ``Learning Parity with Noise'' problem, strongly suggesting that it cannot be efficiently solved. Therefore, it is highly improbable that any method that is robust to this modification - which is true for the approaches based on biases of statistics, as described above - could be successful for detection of planted flat satisfiability.

\section{Detection of lightly planted flat-satisfiability }
\label{SEC:light}
We consider a modified version of our hypothesis testing problem. It has the same null hypothesis and in the alternative, planting only happens with some constant probability $\pi \in (0,1)$, which we call light planting. This auxiliary problem is a useful tool to understand some computational aspects of our original decision problem (where $\pi=1$). Formally, we denote by $q_{x,\pi} :=(1-\pi)q_0 + \pi q_x$ the distribution on the flats of dimension $n-k$ that is mixture of the uniform $q_0$ and of  the planting distribution $q_x$, and define similarly $\Pxsp$ and $\Ppp$. As in the original planting model, we have
\begin{equation*}
\Pxsp := q_{x,\pi}^{\otimes m}\; , \; \Ppp := \frac{1}{2^n} \sum_{x \in \F^n} \Pxsp\, .
\end{equation*}
The alternative hypothesis is therefore replaced with $H_{1,\pi}: \,V = (V_1,\ldots,V_m) \sim \Ppp$, and this new detection problem is
\begin{eqnarray*}
H_0 &:& V = (V_1,\ldots,V_m) \sim \Pu\\
H_{1,\pi} &:& V = (V_1,\ldots,V_m) \sim \Ppp \, .
\end{eqnarray*}

This setting is different from problems with {\em quiet}, or {\em hidden} planting \citep[see, e.g.][]{KrzZde09}.
To tackle this problem, we consider for a given set of flats $V$ the following statistics 
$$s(V,x) = |\{j \, :\, x \notin V_j\}| \text{   , and    } \sigma(V) = \max_{x \in \F^n} s(V,x)\, .$$
They are respectively the number of flats of $V$ on which $x$ does not lie, and the maximum number of flat constraints simultaneously satisfiable by an element of $\F^n$. We derive the following deviation bounds for this second statistic under both hypotheses.
\begin{lemma}
\label{LEM:maxsat}
For a fixed $\Delta>0$, let $m = \Delta n$. It holds that
\begin{align*}
&\Pu \big(\sigma(V) > [(1-2^{-k}) + \alpha ] m \big) \le e^{-[2\alpha^2 \Delta -\log(2)] n}\\
&\Ppp\big(\sigma(V) < [(1-2^{-k}) + \pi 2^{-k} - \alpha ] m \big) \le e^{-2\alpha^2 \Delta n}\, .
\end{align*}

\end{lemma}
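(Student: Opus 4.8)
The plan is to derive both bounds from Hoeffding's inequality applied to the statistic $s(V,x)$ for a fixed $x$, followed by a union bound over the $2^n$ choices of $x$ to pass to $\sigma(V) = \max_x s(V,x)$. For a fixed $x \in \F^n$, under $\Pu$ the indicators $\mathbf{1}\{x \notin V_j\}$ are i.i.d.\ Bernoulli with parameter $1-2^{-k}$ (this is exactly the computation in the proof of Lemma~\ref{LEM:firstmoment}), so $\E_{\Pu}[s(V,x)] = (1-2^{-k})m$. Under $\Pxp$, each flat is drawn from the mixture $q_{x,\pi} = (1-\pi)q_0 + \pi q_x$; with probability $1-\pi$ we get $x \notin V_j$ with probability $1-2^{-k}$, and with probability $\pi$ we draw from $q_x$, which never contains $x$, so $x \notin V_j$ with probability $1$. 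Hence $\E_{\Pxp}[s(V,x)] = \big[(1-\pi)(1-2^{-k}) + \pi\big]m = \big[(1-2^{-k}) + \pi 2^{-k}\big]m$. In both cases $s(V,x)$ is a sum of $m$ independent $\{0,1\}$-valued random variables, so Hoeffding gives, for any $\alpha>0$,
$$\mathbf{P}\big(|s(V,x) - \E[s(V,x)]| \ge \alpha m\big) \le 2\exp(-2\alpha^2 m) = 2\exp(-2\alpha^2 \Delta n)\, .$$

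For the first (upper-tail) bound, note that $\{\sigma(V) > [(1-2^{-k})+\alpha]m\}$ is the event that $s(V,x) > \E_{\Pu}[s(V,x)] + \alpha m$ for \emph{some} $x$, so a union bound over the $2^n$ elements of $\F^n$ together with the one-sided Hoeffding bound $\Pu(s(V,x) - \E[s(V,x)] \ge \alpha m) \le \exp(-2\alpha^2\Delta n)$ yields
$$\Pu\big(\sigma(V) > [(1-2^{-k})+\alpha]m\big) \le 2^n \exp(-2\alpha^2\Delta n) = \exp\big(-[2\alpha^2\Delta - \log 2]n\big)\, .$$
For the second (lower-tail) bound, it suffices to exhibit a \emph{single} good $x$: taking $x = x^*$, the planted point, we have $\sigma(V) \ge s(V,x^*)$, and under $\Ppp$ (conditioning on $x^*$, then noting the bound is uniform in $x^*$) the one-sided Hoeffding bound gives $\Pxp\big(s(V,x^*) < \E_{\Pxp}[s(V,x^*)] - \alpha m\big) \le \exp(-2\alpha^2\Delta n)$; since $\E_{\Pxp}[s(V,x^*)] = [(1-2^{-k}) + \pi 2^{-k}]m$ and this bound does not depend on $x^*$, averaging over $x^*$ gives the stated inequality for $\Ppp$ with no extra union-bound factor.

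The only genuinely delicate point is the independence claim needed for Hoeffding under $\Ppp$: conditional on $x^*$, the flats $V_1,\dots,V_m$ are i.i.d.\ from $q_{x^*,\pi}$, so $s(V,x^*)$ is indeed a sum of $m$ i.i.d.\ Bernoulli$\big((1-\pi)(1-2^{-k}) + \pi\big)$ variables — this is immediate from the product structure $\Pxp = q_{x,\pi}^{\otimes m}$. Everything else is bookkeeping: computing the two Bernoulli means, invoking Hoeffding, and choosing where (and whether) to pay the $2^n$ union-bound cost — paid for the upper tail over all $x$, not needed for the lower tail since one witness $x^*$ suffices. No moment computations beyond first moments are required.
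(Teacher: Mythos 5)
Your proposal is correct and follows essentially the same route as the paper: Hoeffding's inequality for $s(V,x)$ (Bernoulli parameter $1-2^{-k}$ under $\Pu$, and $(1-\pi)(1-2^{-k})+\pi = (1-2^{-k})+\pi 2^{-k}$ under $q_{x,\pi}^{\otimes m}$), a union bound over the $2^n$ points for the upper tail, and the single witness $x^*$ with $\sigma(V)\ge s(V,x^*)$ for the lower tail. The only difference is that you spell out the mixture computation of the planted Bernoulli mean, which the paper states without detail.
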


These deviation can be used to prove that a particular test is powerful in the linear regime.
\begin{theorem}
\label{THM:rateslight}
For a fixed $\Delta>0$, let $m = \Delta n$, $\tilde \Delta_{k,\pi}:=2^{2k-1} \log(2)/\pi^2$ and $\Delta_{k,\pi}:=2^{k} \log(2)/\pi^2$, and $\psi_\sigma(V) = \mathbf{1}\{\sigma(V) >[(1-2^{-k}) + \pi 2^{-(k+1)} ] m\}$. It holds that

For $\Delta > \tilde \Delta_{k,\pi}$, 
$\Pu(\psi_\sigma=1) \vee \Ppp(\psi_\sigma=0) \rightarrow 0$,\\

$\Delta < \Delta_{k,\pi}$,  $\inf_{\psi}\Pu(\psi=1) \vee \Ppp(\psi=0) \rightarrow \frac 12$.

\end{theorem}

If we consider $\pi$ to be a constant, the optimal rate of detection for the light planting version of the problem is therefore still in the linear regime $m=\Delta_{k,\pi} n$. Furthermore, the right dependency of $\Delta_{k,\pi}$ on $\pi$ is in $1/\pi^2$, up to constants that only depend on $k$.  

\section{Computational limits for planting detection}
\label{SEC:hard}
As noted above, the algorithmically efficient testing method $\psi_\cL$ described in Section~\ref{SEC:poly} can be used to solve this detection problem for any planting distribution in $\cP_{\sf FLAT}$, given a sample size of order $n^k$. It is however not robust to the modification of the hypothesis testing problem described in Section~\ref{SEC:light}: it relies heavily on the fact that for $V\sim \Pp$ (or any other planting distribution) there exists some $x^*$ that is flat-satisfiable, which guarantees in turn the existence of a solution to the linear system $\cL_V$. This reasoning does not go through under the light planting model. 

This phenomenon can be contrasted with the behavior of more standard testing methods, based on averages of simple statistics over samples, covered by the framework of statistical algorithms, or queries. Under this paradigm, testing methods are very sensitive to the choice of planting distribution (see, e.g. \cite{FelPerVem13} for a study of the effect of the planting distribution in CSPs on the sample complexity in estimation and detection problems), but not on the fact that the problem instance is actually satisfiable. Indeed, under the light-planting model, expectations under the alternative are only affected by a multiplicative constant $\pi$. 

We give here strong reasons to believe that improving the result of Theorem~\ref{THM:poly} - for the case $\pi=1$ - by using testing procedures of this type is hopeless, and provide a lower bound for statistical algorithms. Our reasoning is that such an approach would be robust to light planting, and would allow us to distinguish $\Pu$ and $\Ppp$ with sample size and running time polynomial in $n$. The following result shows that this would imply in turn the existence of an efficient method for the decision version of the  ``Learning Parity with Noise'' (LPN) problem of \cite{BluKalWas03}, known to be as hard as the recovery of the ``secret'' signal. This is conjectured to be a hard problem, for which the best algorithms run in time $2^{O(n/\log(n))}$, and used to prove the safety of cryptography systems (see \cite{Pie12}, and references within).

Let $(A,b) \in \F^{n \times m} \times \F^m$ be an instance of LPN. For each $j \in [m]$, let $\gamma_{j,1}, \ldots, \gamma_{j,k-1}$ be $k-1$ uniformly random, linearly independent linear forms on $\F^n$, themselves independent of the linear form $\varphi_j$ generated by $A_j$. If $A_j$ is uniformly random, the $n-k$ dimensional linear subspace of $\F^n$ that is the vanishing set of these $k$ linear forms is therefore uniformly random as well. Furthermore, let $\beta_{j,1}, \ldots, \beta_{j,k-1}$ be $k-1$ independent, uniformly random elements of $\F$, independent of $b_j$. Take $\ell_{j,1},\ldots, \ell_{j,k}$ be equal to $\gamma_{j,1}, \ldots, \gamma_{j,k-1},\varphi_j$ in a uniformly random order, and $\eps_{j,1}, \ldots, \eps_{j,k}$ be equal to $\beta_{j,1}, \ldots, \beta_{j,k-1},1-b_j$ in the same order. The equation $\ell_{j}(x) = \eps_j$ defines the $n-k$ dimensional flat $V_j$.

\begin{lemma}
\label{LEM:LPN}
Let $(A,b) \in \F^{n \times m} \times \F^m$, and $V$ the associated instance of $k$-{\sf FLAT} obtained by the procedure described above. The following holds
\begin{itemize}
\item If $(A,b)$ are independent and uniformly random, $V \sim \Pu$.

\item If $(A,b)$ is distributed as an instance of LPN with secret $x$, and probability of error $\eta <1/2$, $V \sim \Pxsp$, with $\pi =1-2\eta$.
\end{itemize}
\end{lemma}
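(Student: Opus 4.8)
The plan is to treat the two bullet points separately and in each to reduce to identifying the law of a single flat $V_1$: the pair $(A_j,b_j)$ and the auxiliary randomness $(\gamma_j,\beta_j,\tau_j)$ used to build $V_j$ are independent across $j\in[m]$, so once we show $V_1\sim q_0$ (resp.\ $V_1\sim q_{x,\pi}$) we get $V\sim q_0^{\otimes m}=\Pu$ (resp.\ $V\sim q_{x,\pi}^{\otimes m}=\Pxsp$). Throughout I will use that the $\beta_{j,t}$ are fresh uniform bits and $\tau_j$ a fresh uniform permutation, both independent of $(A,b)$ and of the $\gamma$'s, and, as recorded in Remark~\ref{REM:process}, that the stabilizer $G\le\mathbf{GL}_n(\F)$ of $x$ acts transitively on $k$-flats not containing $x$.

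For the first bullet take $(A_j,b_j)$ uniform. Then $\varphi_j$ is a uniform linear form independent of the $\gamma$'s, so $(\gamma_{j,1},\dots,\gamma_{j,k-1},\varphi_j)$ is a uniformly random linearly independent $k$-tuple of linear forms (modulo the negligible-probability event that $\varphi_j$ lies in the span of the $\gamma_j$, into which one may condition); likewise $b_j$ is a uniform bit independent of the $\beta$'s, so $(\beta_{j,1},\dots,\beta_{j,k-1},1-b_j)$ is a uniform element of $\F^k$, independent of the form-tuple. Both tuples are exchangeable, so applying the common permutation $\tau_j$ leaves their joint law unchanged; hence $(\ell_j,\eps_j)$ is exactly a uniform choice of $k$ linearly independent linear forms together with $k$ uniform bits, which generates the uniform distribution $q_0$ on $k$-flats.

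For the second bullet write $b_j=\varphi_j(x)+e_j$ with $e_j\sim\cB(1,\eta)$ independent of everything else, so $\pi=1-2\eta\in(0,1)$. The heart of the argument is the conditional law of $V_j$ given whether $x\in V_j$. Now $x\in V_j$ iff $\ell_{j,i}(x)=\eps_{j,i}$ for all $i$, which, after undoing $\tau_j$, is the conjunction of the $k-1$ events $\gamma_{j,t}(x)=\beta_{j,t}$ — each of probability $\tfrac12$ and, the $\beta_{j,t}$ being fresh uniform bits, mutually independent and independent of the forms — together with $\varphi_j(x)=1-b_j$, which over $\F$ is exactly $\{e_j=1\}$, of probability $\eta$. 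Thus $\mathbf{P}(x\in V_j)=2^{-(k-1)}\eta$, and, crucially, this value does not depend on $(\gamma_j,\varphi_j)$; hence conditionally on $\{x\in V_j\}$ the forms $(\gamma_j,\varphi_j)$ are still a uniform linearly independent $k$-tuple $M$ while each $\eps_{j,i}$ is forced to equal $\ell_{j,i}(x)$, so $V_j=x+\bigcap_i\ker M_i$ is uniform on the $k$-flats through $x$; call this law $\mu$. Conditionally on $\{x\notin V_j\}$ I argue by symmetry instead: the construction of $V_j$ is $G$-invariant — $\varphi_j$ is $G$-invariant and any $g\in G$ fixes $\varphi_j(x)$, hence fixes $b_j$ — so this conditional law is $G$-invariant and supported off $x$, whence it equals $q_x$ by Remark~\ref{REM:process}. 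Since $q_0=2^{-k}\mu+(1-2^{-k})q_x$, the law of $V_j$ is $2^{-(k-1)}\eta\,\mu+(1-2^{-(k-1)}\eta)q_x=2\eta\,q_0+(1-2\eta)q_x=(1-\pi)q_0+\pi q_x=q_{x,\pi}$, and independence across $j$ finishes the proof; the case $x=0$, where $G=\mathbf{GL}_n(\F)$, goes through verbatim.

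The step I expect to require the most care is the conditional computation in the subcase $x\in V_j$: one must check that conditioning on this event does not covertly bias the distribution of the linear forms or correlate them with $x$ — for instance by making $\varphi_j$ more likely to sit in a particular slot among the $\ell_{j,i}$. This is precisely what the independence of the auxiliary bits $\beta_{j,t}$ buys, forcing $\mathbf{P}(x\in V_j\mid (\gamma_j,\varphi_j))$ to be the constant $2^{-(k-1)}\eta$ so that the posterior on the forms coincides with the prior. The other point to verify, $G$-invariance of the construction, needs only the elementary fact that any $g$ fixing $x$ fixes $\varphi_j(x)$ for every linear form $\varphi_j$.
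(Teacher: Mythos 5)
Your proof is correct, and both bullets terminate where the paper's do --- at the $G$-invariance argument of Remark~\ref{REM:process} --- but your route through the second bullet uses a genuinely different decomposition. The paper works directly with the vector $\eps_j-\ell_j(x)\in\F^k$: its $\varphi_j$-coordinate equals $1+e_j$, whose law is the mixture $(1-\pi)\,\mathrm{Unif}(\F)+\pi\,\delta_1$, so the whole vector is a mixture of the uniform distribution on $\F^k$ (weight $1-\pi$, yielding $q_0$) and of the ``one random coordinate forced to $1$, the rest uniform'' distribution on $\F^k\setminus\{0\}$ (weight $\pi$, yielding $q_x$ by the Remark); the mixture $q_{x,\pi}$ thus appears immediately, with the noise bit $e_j$ serving as the latent mixture label. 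You instead condition on the geometric event $\{x\in V_j\}$, which cuts across the paper's two mixture components, so you must identify both conditional laws (uniform on flats through $x$, and $q_x$) and reassemble $q_{x,\pi}$ via $q_0=2^{-k}\mu+(1-2^{-k})q_x$ together with $\mathbf{P}(x\in V_j)=2^{-(k-1)}\eta$; your arithmetic checks out. What your version buys is the explicit verification that conditioning does not bias the linear forms (the point you rightly flag as the delicate one), plus a reusable template: any $G$-invariant construction is pinned down by the single number $\mathbf{P}(x\in V_j)$. What the paper's version buys is brevity and a transparent emergence of $\pi=1-2\eta$. One point you treat more carefully than the paper: the exceptional event that $\varphi_j$ vanishes or lies in the span of the $\gamma_{j,t}$, of probability $O(2^{k-n})$, which must be conditioned away for $V_j$ to be a well-defined $k$-flat.
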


\begin{remark}  Lemma~\ref{LEM:LPN} reduces the problem of distinguishing $\Pxsp$ from $\Pu$ to LPN.  The same argument reduces the problem of distinguishing $\Ppp$ from $\Pu$ to DLPN, the ``decision version" of LPN.  The DLPN problem, in turn, is at least as hard as LPN, by \cite[Theorem C.2]{AroGe11}.
\label{rem:DLPN}
\end{remark}

From a computational point of view, there is a very strong difference between the problems of detecting planted solutions to flat satisfiability, and detecting solutions that are only lightly planted, for any constant $\pi \in (0,1)$. It seems impossible to adapt the result of Theorem~\ref{THM:poly} to this new setting, and to describe an efficient algorithm that can distinguish these distributions for a sample size of order $n^k/\pi^2$, similarly to the result of Theorem~\ref{THM:rateslight}, or for any sample size that is polynomial in $n$. 

The testing methods based on simple statistics (i.e. sums of simpler statistics that depend on finite $r$-tuples of samples) as described in Section~\ref{SEC:poly}, are usually robust to these modifications. As an example, for the planted clique problem, consider a light planting distribution that only plants edges in the small subgraph with probability $\pi$. The sum of the degrees of all the vertices has mean $\frac{n(n-1)}{4}$ under the null, and respectively $\frac{n(n-1)}{4} + \frac{k(k-1)}{2}$ and $\frac{n(n-1)}{4} + \pi \frac{k(k-1)}{2}$ under the planted, or lightly planted distribution. Deviation bounds will therefore show that a test based on this statistic will be successful when $k \ge C \sqrt{n}$ under the planted model and $k \ge C \sqrt{n/\pi}$ under the lightly planted model, for some constant $C>0$. The rates of detection for this method are not  changed by this modification, for a constant $\pi$. The situation is similar for detection of planted satisfiability \cite[Thm 3.1]{Ber15}: a statistic based on joint signs of variables appearing several times in the formula has mean $0$ under the uniform distribution, and mean $1/[2(2^k-1)]$ under the planted distribution, and would have mean $\pi^2/[2(2^k-1)]$ under the light planting model. The necessary sample size $m$ of order $\sqrt{n}$ in this problem would only be affected in the constant by $\pi$.

This informal remark can be formalized within the setting of statistical algorithms, by the following
\begin{proposition}
Consider an hypothesis testing problem between distributions $q_0$ and $q_1$ that can be solved by $N$ queries of a statistical oracle with tolerance $\tau$. The hypothesis testing problem between $q_0$ and $q_{1,\pi} = (1-\pi) q_0 + \pi q_1$ can be solved by $N$ queries of a statistical oracle with tolerance $\tau \pi$.
\label{pr:statistical}
\end{proposition}

Indeed, for any bounded function $f$, it holds that $\E_{1,\pi} f - \E_0 f = \pi (\E_1 f - \E_0 f)$. As only the difference in expectation between these two distributions matter, it is equivalent to have access to an oracle with precision $\tau$ over either $q_0$ or $q_1$ or with precision $\pi \tau$ over $q_0$ or $q_{1,\pi}$. This is particularly important if this oracle is obtained by $m$ actual samples of the unknown distributions, in which case $\tau$ is of order $1/\sqrt{m}$. In this case, the necessary sample size needs only to be multiplied by a constant factor $1/\pi^2$ in order to obtain an oracle with the desired precision $\tau \pi$. Note that this propositions can be generalized to cases when the function $f$ is allowed to depend on a finite number of samples from the unknown distribution.

Proposition~\ref{pr:statistical} immediately implies that the $k$-{\sf FLAT} problem cannot be efficiently solved by a statistical oracle.

\begin{proposition}
No statistical oracle can be used to distinguish $\Pu$ from $\Pp$ in a number of queries polynomial in $n$.
\end{proposition}

\begin{proof}
By Proposition~\ref{pr:statistical}, a statistical oracle that could efficiently distinguish $\Pu$ from $\Pp$ could also efficiently distinguish $\Pu$ from $\Ppp$.  By Lemma~\ref{LEM:LPN} and Remark~\ref{rem:DLPN}, this is at least as hard as LPN.  In the computational model of statistical queries, it is known that an exponential number of queries are necessary to solve LPN (\cite{Kea98}), so no statistical algorithm can efficiently distinguish $\Pu$ from $\Pp$.
\end{proof}

As noted in sections \ref{SEC:alt} and \ref{SEC:poly}, the tests $\psi_{\sf FLAT}$ and $\psi_{\cL}$ studied for the problem of distinguishing $\Pu$ and $\Pp$ are robust to changes in the alternative distribution (i.e. the planting distribution), as long as it belongs to $\cPF$. They can even solve this problem when the planting distribution is unknown: this is the case of composite hypothesis testing. In this sense, they are able to refute, with high probability, most flat satisfiability instances when $m$ is greater than, respectively $\Delta_k n$ and $\Delta_k n^k$, while never refuting a flat satisfiable instance. This is reminiscent of a problem considered for 3-{\sf SAT} formulas by \cite{Fei02} in a hardness hypothesis. For the problem of satisfiability, the usual planted distribution does not illustrate well the hardness of this problem. Indeed, as mentioned above, there exists even a polynomial-time test that can distinguish the uniform and planted distribution with a sample size of order $\sqrt{n}$, which is optimal and well below the satisfiability threshold and the conjectured hard regime \cite{Ber15}. This test is also robust to the introduction of light planting, as it is based on distinguishing the expectation of a simple statistic over samples between the null and alternative hypotheses.

For the detection of planted flat-satisfiability, we show the existence of a test that can be decided in polynomial time, and that only necessitates a polynomial number of samples, and that never wrongly refutes a flat satisfiable instance (i.e. is powerful for all alternatives $\Pone \in \cPF$). However, as shown in Lemma~\ref{LEM:LPN}, these tests are not robust to other changes in the alternative, where planting yields instances that are {\em almost} flat satisfiable. An analogue of the problem, as considered in Hypothesis 2 in \cite{Fei02}, which weakens in this way the original hypothesis, would be as shown here, a much harder task.

\bibliographystyle{plainnat}
\bibliography{statebib2}
\newpage
\onecolumn{
\begin{center}
{\Large Supplementary material to\\ ``
Detection of Planted Solutions for Flat Satisfiability Problems''} 
\vskip 1.5 cm
\end{center}

\appendix
\section{Technical proofs}
\begin{proof}[of Lemma~\ref{LEM:firstmoment}]
It holds that
$$Z=\sum_{x\in \F^n} \prod_{i=1}^m \mathbf{1}\{x \notin V_j \} \, .$$
By linearity, symmetry of the distribution, and independence of the $V_j$, we have for any $x_0 \in \F^n$
$$\E[Z] = 2^n (\Pu(x_0 \notin V_1))^m \, .$$
Furthermore, for each $k$-flat of $\F^n$, $|V_1|=2^{n-k}$, which yields the desired result. 
\end{proof}

\begin{proof}[of Lemma~\ref{LEM:secondmoment} ]
We derive the second moment of $Z$
\begin{eqnarray*}
Z^2&=&\sum_{x,x'\in \F^n} \mathbf{1}\{x \in \cS(V) \}  \mathbf{1}\{x' \in \cS(V) \}\\
&=& \sum_{x}  \mathbf{1}\{x \in \cS(V) \} + \sum_{x\neq x'} \mathbf{1}\{x \in \cS(V) \} \mathbf{1}\{x' \in \cS(V) \}\, .
\end{eqnarray*}
Taking expectation yields
$$\E[Z^2] = \E[Z] + \sum_{x\neq x'} \Pu\big(\{x \in \cS(V) \} \cap \{ x' \in \cS(V) \} \big)\, .$$
The uniform distribution is invariant under the action of the affine group $G$, which is doubly transitive on $\F^n$. Therefore, the term $\Pu\big(\{x \in \cS(V) \} \cap \{ x' \in \cS(V) \} \big)$ is constant for all couples of distinct elements $(x,x')$ of $\F^n$. To compute this distribution, it thus suffices to consider that $x$ and $x'$ are uniformly randomly chosen among the set of pairs of distinct elements. For all $j \in [m]$, this yields
$$\Pu\big(\{x \notin V_j \} \cap \{ x' \notin V_j \} \big) = \frac{2^n - 2^{n-k}}{2^n} \cdot \frac{2^n - (2^{n-k} -1)}{2^n-1} = (1-2^{-k}) \Big(1-2^{-k} +\frac{2-2^{-k}}{2^n-1}\Big)\, .$$
Using this in the derivation of the second moment, we have
\begin{eqnarray*}
\E[Z^2]&=&\E[Z] + (2^{2n} -2^n) (1-2^{-k})^m \Big(1-2^{-k} +\frac{2-2^{-k}}{2^n-1}\Big)^m\\
&\le&  \E[Z] + 2^{2n}(1-2^{-k})^{2m} \Big(1+\frac{2-2^{-k}}{1-2^{-k}} \frac{1}{2^n-1}\Big)^m\\
&\le&  \E[Z] + \E[Z]^2 \Big(1 +\frac{2-2^{-k}}{1-2^{-k}} \frac{1}{2^n-1}\Big)^{\Delta n}\, .
\end{eqnarray*}
Note that the last term is a $1+o(1)$.
\end{proof}

 \begin{proof}[of Theorem~\ref{THM:sat} ]
We first note that $2 (1-2^{-k})^{\Delta_k} =1$, so that $\E[Z] = [2 (1-2^{-k})^{\Delta}]^n$ is exponentially large when $\Delta<\Delta_k$, and exponentially small when $\Delta>\Delta_k$.\\
\begin{itemize}
\item For $\Delta < \Delta_k$, Markov's inequality yields
$$\Pu(V \in {\sf FLAT}) = \Pu(Z(V) \ge 1) \le \E[Z] \rightarrow 0\,.$$

\item For $\Delta < \Delta_k$, Paley-Zigmund's inequality and the result of Lemma 3 yields
$$\Pu(V \in {\sf FLAT}) = \Pu(Z(V) > 0) \ge \frac{\E[Z]^2}{\E[Z^2]} \rightarrow 1\, .$$
\end{itemize}
\end{proof}

\begin{proof}[of Lemma~\ref{LEM:LR}]
By definition of $\Pp$
$$\frac{\Pp(V)}{\Pu(V) } = \frac{1}{2^n}\sum_{x \in \F^n} \frac{\Px(V)}{\Pu(V)}\, .$$
To compute the probabilities in the above ratios, we use the interpretation above of $m$ drawings in $N=2^k \cN_k$ possible flats independently if the distribution is $\Pu$, or otherwise in $N^*=(2^k-1)\cN_k$ possible choices corresponding to flats that do not contain $x^*$. Therefore, it holds for all $V$
\begin{equation*}
\frac{\Px(V)}{\Pu(V)} = \left\{
    \begin{array}{rl}
      0 &\; \text{if } x \notin \cS(V)\\
       \Big(\frac{N}{N^*}\Big)^m &\;\text{otherwise } 
    \end{array} \right.
\end{equation*}
Therefore, the likelihood ratio can be expressed in terms of $\mathbf{1}\{x \in \cS(V)\}$, and $N/N^* = 1/(1-2^{-k})$
\begin{eqnarray*}
\frac{\Pp}{\Pu}(V) &=& \frac{1}{2^n} \sum_{x \in \F^n} \Big(\frac{N}{N^*}\Big)^m \mathbf{1}\{x \in \cS(V)\}\\
 &=& \frac{1}{\E[Z]} \sum_{x \in \F^n} \mathbf{1}\{x \in \cS(V)\} = \frac{Z(V)}{\E[Z]}\, .
 \end{eqnarray*}
 \end{proof}

\begin{proof}[of Lemma~\ref{LEM:syslin} ]
Consider a fixed $Z \in \F^{N_k}$ such that $Z_\emptyset =1$. For an $k$-flat $W$ described by $(\ell,\alpha)$, we write $\cL_{\alpha,\ell}(Z)$ as a function $q_{Z,\ell}$ of $\alpha \in \F^k$
$$q_{Z,\ell}(\alpha) = \sum_{\substack{S \subset [n] \\ |S| \le k}} c_S(\ell,\alpha) Z_S\, .$$
We observe that each $c_S(\ell, \cdot)$ is a multivariate multilinear polynomial (with monomials that are squarefree), so that $q_{Z,\ell} \in \F[\alpha_1,\ldots,\alpha_k]$. Furthermore, the coefficient of the monomial $\alpha_1 \ldots \alpha_k$ is $Z_\emptyset=1$. As the squarefree monomials are linearly independent, there exists an element of $\F^k$ such that $q_{Z,\ell}(\alpha) \neq 0$. Therefore, as $\alpha$ is uniformly distributed under the uniform distribution $q_0$, it holds that
$$\Pu(\cL_{\alpha,\ell}(Z) = 0) = \Pu(q_{Z,\ell}(\alpha) = 0) \le 1-2^{-k}\, .$$
As an aside, note that this bound is tight. Indeed, for all $Z \in \cV$, the event $\cL_{\alpha,\ell}(Z) = 0$ is equivalent to $z\notin W$, for $z = \phi^{-1}(Z)$. The probability of this event is $1-2^{-k}$, as seen in the proof of Lemma~2.\\

Let $V=(V_1,\ldots,V_m) \sim \Pu$. By independence, we obtain directly that
$$\Pu(\cL_{\ell_j ,\alpha_j}(X) = 0\; , \, \forall j \in [m])  \le (1-2^{-k})^m\, .$$
By a union bound over all elements of $\F^{N_k}$, it holds that
$$\Pu(\text{$\cL_V$ has a solution}) \le 2^{N_k} (1-2^{-k})^m\, .$$
Taking $\Delta > \Delta_k$ yields the desired result.
\end{proof}

\begin{proof}[of Lemma~\ref{LEM:maxsat} ]
For all $x \in \F^n$, we observe that under the null hypothesis, the variable $s(x,V)$ has distribution $\cB(m,1-2^{-k})$. Therefore, by Hoeffding's inequality, 
$$\Pu \big(s(x,V) > [(1-2^{-k}) + \alpha ] m \big) \le \exp(-2\alpha^2 m)\,.$$
A union bound on $\F^n$ yields
$$\Pu \big(\sigma(V) > [(1-2^{-k}) + \alpha ] m \big) \le 2^n \exp(-2\alpha^2 m) \le \exp\big(- \big[2\alpha^2 \Delta -\log(2)\big] n\big)\,.$$

Under $\Px$ the variable $s(x^*,V)$ has distribution $\cB\big(m,(1-2^{-k}) + \pi 2^{-k}\big)$. By Hoeffding's inequality, 
$$\Pxp \big(s(x^*,V) < [(1-2^{-k})+\pi 2^{-k} -\alpha ] m \big) \le \exp(-2\alpha^2 m)\,.$$
By definition of $\Ppp$ and $\sigma(V) \ge s(x,V)$ for all $x \in \F^n$, we obtain the desired result.
\end{proof}

\begin{proof}[of Theorem 11~\ref{THM:rateslight} ]
For $\Delta> \tilde \Delta_{k,\pi}$, taking $\alpha = \pi 2^{-(k+1)}$ in the results of Lemma~10 yields the desired upper bound, as $2 \alpha^2 \Delta -\log(2)>0$.

For $\Delta<\Delta_{k,\pi}$, we derive a bound on the total variation distance $d_{\sf TV}(\Pu,\Ppp)$, through the inequality

$$d_{\sf TV}(\Pu,\Ppp) = \frac{1}{2}\E\Big[\Big|\frac{\Ppp}{\Pu}(V)-1\Big|\Big] \le \frac{1}{2} \sqrt{\E\Big[\Big(\frac{\Ppp}{\Pu}(V)-1\Big)^2\Big]}\, .$$
The term inside the square root being equal to the chi-square divergence $\chi^2(\Ppp,\Pu)$ between the two distributions. We write $\Pxsp = q_{x,\pi}^{\otimes m}$ and $\Pu=q_0^{\otimes m}$ as products of the distribution of each independent $V_j$. Writing out $\Ppp$ as a uniform mixture of the $\Pxsp$ yields
\begin{eqnarray*}
\chi^2(\Ppp,\Pu) &=& \frac{1}{2^{2n}} \sum_{x,x' \in \F^n} \E\Big[ \frac{\Pxsp}{\Pu} \frac{\Pxpp}{\Pu}(V)\Big] -1\\
&=& \frac{1}{2^{2n}}\sum_{x,x' \in \F^n} \E\Big[ \frac{q_{x,\pi}}{q_0} \frac{q_{x',\pi}}{q_0}(V_1)\Big]^m -1 \\
&=&\frac{1}{2^{2n}} \sum_{x \in \F^n} \E\Big[ \Big(\frac{q_{x,\pi}}{q_0} (V_1)\Big)^2\Big]^n + \frac{1}{2^{2n}}\sum_{x\neq x'} \E\Big[ \frac{q_{x,\pi}}{q_0} \frac{q_{x',\pi}}{q_0}(V_1)\Big]^m -1\, .
\end{eqnarray*}
Note that $q_{x,\pi} = (1-\pi) q_0 + \pi q_x$, where $q_x$ is the uniform distribution on $k$-flats that do not contain $x$ (the planting distribution), so that
$$\frac{q_{x,\pi}}{q_0} = 1+ \pi \Big[ \frac{q_{x}}{q_0}-1\Big]\, .$$
Substituting this in the above yields
\begin{eqnarray*}
\chi^2(\Ppp,\Pu) &=& \frac{1}{2^{2n}} \sum_{x \in \F^n} \Big(1+ \pi^2\Big[ \E\Big[\Big(\frac{q_{x}}{q_0}(V_1)\Big)^2\Big]-1\Big] \Big)^m \\
&&+ \frac{1}{2^{2n}}\sum_{x\neq x'} \Big(1+ \pi^2\Big[ \E\Big[\frac{q_{x}}{q_0}\frac{q_{x'}}{q_0}(V_1)\Big]-1\Big] \Big)^m -1 \, .
\end{eqnarray*}
Furthermore, for any $k$-flat $V_1$, it holds that $q_x/q_0(V_1) = (N/N_k) \mathbf{1}\{x \notin V_1\}$. We give the following upper bound the last two terms of this equation's RHS, 
\begin{eqnarray*}
\frac{1}{2^{2n}}\sum_{x\neq x'} \Big(1+ \pi^2\Big[ \E\Big[\frac{q_{x}}{q_0}\frac{q_{x'}}{q_0}(V_1)\Big]-1\Big] \Big)^m -1 &\le& \frac{1}{2^{2n}} 2^n \Big(1- \pi^2+ \pi^2\frac{\Pu(x,x' \notin V_1)}{(1-2^{-k})^2}\Big)^m-1 \\
&\le& \Big(\frac{1-\pi^2}{2} \Big)^n \Big(1+ \frac{\pi^2}{1-\pi^2} \frac{2-2^{-k}}{(1-2^{-k}} \frac{1}{2^n -1} \Big)^{\Delta n}-1\\
&\le& \Big(1+\frac{c_k \pi^2}{2^n -1} \Big)^{c_k n/\pi^2}-1\, ,
\end{eqnarray*}
for some constant $c_k>0$ (independent of $n$ and $\pi$), by the formula for $\Pu(x,x' \notin V_1)$ derived in the proof of Lemma~\ref{LEM:secondmoment}. The last term converges to 0 when $n \rightarrow +\infty$. We bound as well the first term of the main equation's RHS
\begin{eqnarray*}
\frac{1}{2^{2n}} \sum_{x \in \F^n} \Big(1+ \pi^2\Big[ \E\Big[\Big(\frac{q_{x}}{q_0}(V_1)\Big)^2\Big]-1\Big] \Big)^m &\le& \frac{1}{2^{2n}} 2^n (1 + \pi^2(\Pu(x \notin V_1)-1))^m \\
&\le& \frac{1}{2^n} \Big(1+ \frac{\pi^2}{2^k-1} \Big)^{\Delta n}\, .
\end{eqnarray*}
Taking $\Delta < \Delta_{k,\pi} = 2^k \log(2) /\pi^2$ yields $1/2 (1+ \pi^2/(2^k-1))^\Delta <1$, and all the terms of $\chi^2(\Ppp,\Pu)$ go to 0 when $n \rightarrow + \infty$.

\end{proof}

\begin{proof}[of Lemma 12~\ref{LEM:LPN} ]
In all cases, the $k$-flats are independent, and the $m$ sets of $k$ linear forms are uniformly distributed. If $(A,b)$ is uniformly random, so are the $b_j$, and as a consequence, the $\eps_j$. This yields the desired $V \sim \Pu$. However, if there is a secret $x$, $\phi_j(x) = 1-  b_j$ with probability $\eta$. The distribution of $1-b_j-\phi_j(x)$ is therefore is a mixture of the uniform distribution on $\F$ (with weight $1 -\pi$) and of the unit mass at $1$ (with weight $\pi$). The distribution of $\eps_j - \ell_j(x)$ is thus the mixture of the uniform distribution on $\F^n$ (with weight $1 -\pi$)
and of the the distribution on $\F^k \setminus\{0\}$ generated by placing a $1$ in one of the coefficients of $\eps_j - \ell_j(x)$, and letting the others be independent and uniform. As shown in Remark~1, the flat $V_j$ has distribution $q_{x,\pi}$ and $V\sim \Pxsp$, as desired.
\end{proof}
}

\end{document}